\theoremstyle{plain}
\newtheorem{theorem}{Theorem}
\newtheorem{lemma}{Lemma}
\newtheorem{definition}{Definition}
\newtheorem{problem}{Problem}
\newtheorem{remark}{Remark}
\newtheorem{example}{Example}
\theoremstyle{definition}
\newcommand{\half}{\frac{1}{2}}
\newcommand{\eqn}[1]{\begin{align}#1\end{align}}
\newcommand{\eqnN}[1]{\begin{align*}#1\end{align*}}
\newcommand{\deqn}[1]{\begin{dmath}#1\end{dmath}}
\renewcommand{\to}{\rightarrow}
\newcommand{\argmin}{\operatorname*{argmin}}
\newcommand{\minimise}{\operatorname*{minimise}}
\newcommand{\bmat}[1]{\begin{bmatrix}#1\end{bmatrix}}
\renewcommand{\S}{\mathcal{S}}
\newcommand{\C}{\mathcal{C}}
\newcommand{\X}{\mathcal{X}}
\newcommand{\U}{\mathcal{U}}
\newcommand{\K}{\mathcal{K}}
\newcommand{\Int}{\text{Int}}
\newcommand{\reals}{\mathbb{R}}
\newcommand{\classK}{class-$\K$ }
\newcommand{\classKinf}{\mbox{class-$\K_\infty$} }
\newcommand{\KICCBF}{K_\text{\scalebox{.6}{ICCBF}}}
\title{\LARGE \bf
Safe Control Synthesis via Input Constrained Control Barrier Functions
}
\author{Devansh R. Agrawal and Dimitra Panagou
\thanks{The authors would like to acknowledge the support of the National Science Foundation under award number 1942907.}
\thanks{Both authors are with the Department of Aerospace Engineering, University of Michigan, Ann Arbor, MI, USA; \texttt{\{devansh, dpanagou\}@umich.edu}}%
}
\begin{document}

\maketitle
\thispagestyle{empty}
\pagestyle{empty}

\begin{abstract}
This paper introduces the notion of an Input Constrained Control Barrier Function (ICCBF), as a method to synthesize safety-critical controllers for non-linear control affine systems with input constraints. The method identifies a subset of the safe set of states, and constructs a controller to render the subset forward invariant. The feedback controller is represented as the solution to a quadratic program, which can be solved efficiently for real-time implementation. Furthermore, we show that ICCBFs are a generalization of Higher Order Control Barrier Functions, and thus are applicable to systems of non-uniform relative degree. Simulation results are presented for the adaptive cruise control problem, and a spacecraft rendezvous problem.
\end{abstract}

\section{Introduction}

Many cyber-physical systems are safety critical, that is, they require guarantees that safety constraints are not violated during operation. Safety is often modeled by defining a safe subset of the state space for a given system, within which the state trajectories must always evolve. Recently, set-theoretic methods, such as Control Barrier Functions (CBFs) have become increasingly popular as a means of constructing and verifying such controllers~\cite{Ames2019, amesTAC2017, ASIF, kunalCBF}. 

Quadratic Programs (QPs) enforcing CBF conditions have become popular for synthesizing low-level controllers to ensure that system trajectories remain within the safe set. These QPs can be solved in real-time at the current state, enabling safety-critical controllers for a wide range of non-linear systems to be computed efficiently during operation.

Prior work on CBFs has largely focused on systems where a sufficiently large control authority is available to ensure forward invariance of the safe set. However in the presence of input constraints, only a subset of the safe set may be rendered forward invariant. A few methods have been proposed to compute such subsets of the safe set. These include reachability analysis by solving a Hamilton-Jacobi partial differential equations over the state-space~\cite{levelset, ASIF} and Sum-of-Squares (SOS) based semi-definite programs, which employ the positivstellensatz theorem to provide a certificate of safety~\cite{parilloSOS, SOSQP}. Both methods scale poorly with the dimension of the state-space, and SOS methods assume the dynamics and safety constraints can be expressed as polynomials. Some methods have also been proposed for dynamical systems of specific classes, for instance Euler-Lagrange systems~\cite{ELsystems}.

In this paper, we introduce the notion of an Input Constrained Control Barrier Function (ICCBF). An ICCBF guarantees that an input constrained controller can render a subset of the safe set forward invariant. We present the construction of these functions, and define the subset of the safe set, and a corresponding controller, such that the subset is guaranteed to be forward invariant under this controller. Furthermore, we show that ICCBFs represent a generalization of Higher Order CBFs (HOCBFs)~\cite{hocbf}, enabling synthesis of input-constrained controllers for safe sets of higher, and non-uniform relative degree. Finally, the method is applied to an adaptive cruise control problem~\cite{ames2014},  and a spacecraft rendezvous problem, demonstrating that ICCBFs define a safe controller that respects input constraints.

The paper is structured as follows: In Section 2, some preliminaries on set invariance, as well as the problem statement are presented. Section 3 presents the definition of ICCBFs, and the synthesis of corresponding feedback controllers for ensuring safety. Finally, simulation results are presented in section 4.

\section{Problem Formulation and Preliminaries}

\subsection{Notation}
We denote the set of real numbers $\reals$ and non-negative reals $\reals_+$. A continuous function $\alpha: [0, a) \rightarrow [0, \infty)$ is \classK if it is strictly increasing on the domain, and $\alpha(0) = 0$. It is \classKinf if $a = \infty$ and $\lim_{r\to\infty} \alpha(r) = \infty$. A continuous function $\alpha: (-b, a) \rightarrow (-\infty, \infty)$ is an extended \classK function, if for some $a, b > 0$, if it is strictly increasing and $\alpha(0) = 0$. The Lie derivative of $h(x)$ along $f(x)$ is denoted $L_f h(x) = \frac{d h}{d x} f(x) $. The short hand $\dot h(x) = \frac{d h}{dx} \dot x$ will also be used to indicate time derivatives of $h$ along flows from a state $x$. We denote $\Int(\C)$ and $\partial \C$ as the interior and boundary of a set $\C$. The empty set is denoted $\emptyset$. 

\subsection{Problem Setup}
Consider a nonlinear, control-affine dynamical system, with state $x \in \X \subset \reals^n$ and control input $u \in \U \subset \reals^m$ given as
\eqn{
\dot x = f(x) + g(x) u
\label{eqn:sys}}
where $f : \X \rightarrow \reals^n$, $g: \X \rightarrow \reals^{n \times m}$ are sufficiently smooth, as discussed in \ref{sec:ICCBFs}. We will assume these functions are known, and the system state is available to the controller. 

We define a state $x$ as \emph{safe} if it lies in a set $\S$, which is defined as the 0-superlevel set of a continuously differentiable function $h : \X \rightarrow \reals$, as follows:
\eqn{
\S &\triangleq \{ x \in \X : h(x) \geq 0\} \label{eqn:setDef}\\
\partial \S &\triangleq \{ x \in \X : h(x) = 0\} \label{eqn:setDefdS}\\
\Int(\S) &\triangleq \{ x \in \X : h(x) > 0 \label{eqn:setDefIntS}\}
}
The set $\S$ is referred to as the \emph{safe} set. We will assume this set is closed, non-empty and is simply connected.

\begin{definition}
Under a control input $u \in \U$, the set $\S$ is \emph{forward invariant} for the closed-loop system~(\ref{eqn:sys}) if for all $x(0) \in \S$, it holds that $x(t) \in \S$ for all $t\geq 0$.
\end{definition}

Under input constraints, however, it may be impossible to render the safe set forward invariant, as in Example \ref{eg:1}:
\begin{example}
\label{eg:1}
Consider the unstable single-input, scalar dynamical system with input constraints:
\eqnN{\dot x = x + u, \quad u \in \U = [-1, 1]}
Let the safety set be \eqnN{\S = \{ x\in \reals : x \leq 2 \}, \quad h(x) = 2 -x} Now consider the boundary state $x = 2$. In this case,
\eqnN{\dot h(x, u) = -2 - u \leq -1}
and therefore under any $u\in \U$, closed-loop trajectories will leave safe set. Thus $\S$ cannot be rendered forward invariant under the input constraints. \hfill $\triangle$
\end{example}

To address the problem when the safe set cannot be rendered forward invariant, we define an \emph{inner safe set} as follows:
\begin{definition}
A non-empty closed set $\C^*$ is an \emph{inner safe set} of the safe set $\S$ for the dynamical system~(\ref{eqn:sys}), if $\C^* \subseteq \S$ and there exists a feedback controller $u : \C^* \rightarrow \U$ such that for the closed-loop system~(\ref{eqn:sys}), for all $x(0) \in \C^*$, it holds that $x(t) \in \C^*$ for all $t\geq 0$. 
\end{definition}
Now, we can state the main objective of this paper:
\begin{problem}
Given the system (\ref{eqn:sys}), find a closed set $\C^* \subseteq \S$ and a feedback controller $u : \C^* \rightarrow \U$, such that for any $x(0) \in \C^*$, the closed-loop trajectories of~(\ref{eqn:sys}) satisfy $x(t) \in \C^*$ for all $t \geq 0$.
\end{problem}

In words, the objective is to find a subset of the safe set, and a corresponding feedback controller that renders the subset forward invariant.

\subsection{Set Invariance}

Nagumo's theorem provides a necessary and sufficient condition for the forward invariance of a set $\S$. 
\begin{lemma} \emph{\cite[Thm.~3.1]{blanchini}}
Consider the system $\dot x = f(x)$, and assume that for each initial condition $x\in \X$, it admits a globally unique solution. Then, a closed set $\S \subseteq \X$ is forward invariant for the system, if and only if 
\deqn{
f(x) \in \hat{T}_{\S}(x)\condition{$\forall x \in \partial \S$},
}
where $\hat{T}_{\S}(x) $ is the tangent cone of $\S$ at $x$. 
\end{lemma}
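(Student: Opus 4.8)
The plan is to prove the two implications separately, treating necessity as the routine direction and concentrating effort on the converse; throughout I will denote by $\phi(t)$ the unique solution of $\dot x = f(x)$ through a given initial state, and I will use that $f$ is locally Lipschitz, the standard sufficient condition for the assumed uniqueness of solutions. For necessity, suppose $\S$ is forward invariant and fix $x \in \partial \S$, so $\phi(0)=x$ and $\phi(t)\in\S$ for all $t\geq 0$. A first-order expansion gives $\phi(t) = x + t f(x) + o(t)$, hence $\text{dist}(x + t f(x), \S) \leq \|x + t f(x) - \phi(t)\| = o(t)$. Dividing by $t$ and letting $t\to 0^+$ yields $\liminf_{t\to 0^+}\text{dist}(x+tf(x),\S)/t = 0$, which is exactly the statement $f(x)\in \hat{T}_{\S}(x)$ by the definition of the contingent (Bouligand) tangent cone.

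For sufficiency, assume $f(x)\in\hat{T}_{\S}(x)$ for all $x\in\partial\S$, fix $x_0\in\S$, and set $d(t)\triangleq \text{dist}(\phi(t),\S)$; the goal is to show $d\equiv 0$. Two ingredients are needed. The first is geometric: whenever $d(t)>0$, let $p(t)\in\S$ be a nearest point (it exists since $\S$ is closed, and lies on $\partial\S$), and write $\nu = \phi(t)-p(t)$, so $\|\nu\| = d(t)$. Expanding $\|\phi(t)-x_n\|^2 \geq \|\nu\|^2$ for points $x_n\in\S$ approaching $p(t)$ along an arbitrary tangent direction $v\in\hat{T}_{\S}(p(t))$ shows $\langle \nu, v\rangle \leq 0$; that is, a nearest-point (proximal) normal is polar to the contingent cone. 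Applying this with $v = f(p(t))\in\hat{T}_{\S}(p(t))$ gives $\langle \nu, f(p(t))\rangle \leq 0$. The second ingredient is the Lipschitz estimate $\langle \nu, f(\phi(t)) - f(p(t))\rangle \leq \|\nu\|\, L\,\|\phi(t)-p(t)\| = L\, d(t)^2$, so that $\langle \nu, f(\phi(t))\rangle \leq L\,d(t)^2$.

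Differentiating the distance along the flow then produces the one-sided bound $D^+ d(t) \leq \langle \nu, f(\phi(t))\rangle / d(t) \leq L\, d(t)$ at every $t$ where $d(t)>0$, and a comparison argument closes the proof: if $d(t_1)>0$ for some $t_1$, set $t_0 = \sup\{t<t_1 : d(t)=0\}$, so $d(t_0)=0$ and $d>0$ on $(t_0,t_1]$; integrating $D^+ d \leq L d$ there gives $d(t_1)\leq d(t_0)e^{L(t_1-t_0)} = 0$, a contradiction, whence $d\equiv 0$ and $\phi(t)\in\S$ for all $t\geq 0$. The main obstacle is precisely this sufficiency direction and its nonsmooth analysis: the distance function is only Lipschitz and the projection $p(t)$ need not be unique, so the differential inequality must be read through the upper Dini derivative, and the polarity of proximal normals to the \emph{Bouligand} (not merely the Clarke) cone must be verified rather than assumed. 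Because the estimate $D^+ d \leq L d$ is valid only where $d>0$, the comparison must be anchored at the last instant the trajectory touches $\S$, as above; and in the general continuous-but-not-Lipschitz case one would replace $L\,d$ by a comparison function $\omega(d)$ inheriting the uniqueness property, which is the genuinely delicate point.
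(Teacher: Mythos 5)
Your proposal is correct, but note that the paper does not prove this lemma at all: it is imported verbatim as a known result, with the proof deferred to the cited reference (Blanchini's survey, Thm.~3.1). So the comparison here is between your self-contained argument and a bare citation. What you give is the classical proof of Nagumo's theorem, and the details check out: necessity via the first-order expansion $\phi(t)=x+tf(x)+o(t)$, which immediately puts $f(x)$ in the Bouligand cone; sufficiency via the distance function $d(t)=\mathrm{dist}(\phi(t),\S)$, the polarity inequality $\langle \nu, v\rangle \leq 0$ for proximal normals $\nu$ against contingent directions $v\in\hat{T}_{\S}(p)$ (your expansion $\|\nu-t_n v_n\|^2\geq\|\nu\|^2$ is exactly right), the Lipschitz estimate giving $D^+d\leq Ld$, and the Dini-derivative comparison anchored at the last contact time $t_0$. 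You also correctly handle the two standard traps: the projection $p(t)$ need not be unique (any nearest point works, and it lies on $\partial\S$, where the hypothesis applies), and the differential inequality only holds where $d>0$, which is why the $t_0$-anchoring is needed. The one genuine mismatch with the statement as quoted is the hypothesis: the lemma assumes only global existence and uniqueness of solutions, not local Lipschitz continuity of $f$, and your Gr\"onwall step uses the Lipschitz constant $L$ essentially. You flag this yourself, and the fix you sketch (replacing $Ld$ by a uniqueness-certifying comparison function $\omega(d)$, as in the Brezis--Crandall refinement) is the correct one, though carrying it out is nontrivial. For the purposes of this paper the gap is immaterial, since the lemma is only ever invoked for closed-loop fields $f(x)+g(x)u(x)$ with $u$ Lipschitz (Theorem~\ref{theorem:main} assumes exactly this), so your proof covers every application made of the lemma; what the citation buys instead is the theorem in its full generality.
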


In this work, since $\S$ is defined by $h(x) \geq 0$, Nagumo's theorem simplifies to the following statement.
\begin{lemma}
Consider the system~(\ref{eqn:sys}). Let the set $\S$ be defined by a continuously differentiable function $h : \X \rightarrow \reals$, as per (\ref{eqn:setDef}-\ref{eqn:setDefIntS}). Consider a controller $u : \S \rightarrow \U$, such that for any initial condition $x \in \S$, the closed-loop system~(\ref{eqn:sys}) admits a globally unique solution. Then set $\S$ is forward invariant if and only if 
\deqn{
L_fh(x) + L_gh(x) u(x) \geq 0 \condition{$\forall x \in \partial \S$}.\label{eqn:nagumo}}
\end{lemma}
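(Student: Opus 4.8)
The plan is to invoke Nagumo's theorem (Lemma 1) applied to the closed-loop vector field $f_{cl}(x) \triangleq f(x) + g(x)u(x)$. By assumption the closed-loop system admits a globally unique solution from each $x \in \S$, so the hypotheses of Lemma 1 are satisfied, and forward invariance of $\S$ is equivalent to the tangent-cone condition $f_{cl}(x) \in \hat{T}_{\S}(x)$ for every $x \in \partial\S$. The entire task therefore reduces to rewriting this tangent-cone membership in terms of the Lie derivatives of $h$.

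The key step is to compute the tangent cone $\hat{T}_{\S}(x)$ of the superlevel set $\S = \{x \in \X : h(x) \geq 0\}$ at a boundary point. First I would note that on $\partial\S$ we have $h(x) = 0$, and that $h$ is continuously differentiable. Under the standard regularity assumption that $\frac{dh}{dx} \neq 0$ on $\partial\S$ (so that $0$ is a regular value of $h$ and $\partial\S$ is a smooth hypersurface), the tangent cone of the superlevel set is the closed half-space $\hat{T}_{\S}(x) = \{ v \in \reals^n : \frac{dh}{dx} v \geq 0 \}$; that is, a direction is tangent to $\S$ precisely when it does not point strictly outward across the level set. I would establish this by the two inclusions: any $v$ with $\frac{dh}{dx} v > 0$ is tangent since moving along it strictly increases $h$, and the boundary case $\frac{dh}{dx} v = 0$ follows from a first-order Taylor expansion of $h$ together with the definition of the tangent cone as a limit of feasible directions; conversely any $v$ with $\frac{dh}{dx} v < 0$ decreases $h$ to first order and so cannot be tangent.

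With the tangent cone identified, the conclusion is immediate. Substituting $v = f_{cl}(x) = f(x) + g(x)u(x)$ into the half-space description gives $\frac{dh}{dx}\left( f(x) + g(x)u(x) \right) \geq 0$, which by the definition of the Lie derivative is exactly $L_f h(x) + L_g h(x) u(x) \geq 0$. Since this is required for all $x \in \partial\S$, condition (\ref{eqn:nagumo}) is equivalent to forward invariance, and because Lemma 1 is itself an iff, both directions of the claimed equivalence are obtained at once.

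I expect the main obstacle to be the tangent-cone computation at boundary points where $\frac{dh}{dx} = 0$. At such degenerate points the half-space characterization fails: the first-order condition $L_f h + L_g h\, u = 0$ is then satisfied automatically, yet whether trajectories remain in $\S$ is governed by the higher-order behaviour of $h$, so the equivalence can break down. I would resolve this either by explicitly assuming that $0$ is a regular value of $h$ (the cleanest route and standard in the CBF literature), or by restricting the tangent-cone argument to points where $\frac{dh}{dx} \neq 0$ and treating the degenerate boundary points as a separate, measure-zero case that does not affect invariance.
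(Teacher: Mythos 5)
Your main route is correct and is essentially the argument the paper intends: the paper in fact states this lemma \emph{without proof}, treating it as an immediate specialization of Nagumo's theorem (its Lemma 1) to superlevel sets, which is exactly your reduction --- apply Lemma 1 to the closed-loop field $f(x)+g(x)u(x)$ and identify $\hat{T}_{\S}(x)$ at boundary points with the half-space $\{v \in \reals^n : \frac{dh}{dx}(x)\, v \geq 0\}$. You are also right, and it is to your credit, that this identification (and hence the ``if'' direction of the equivalence) needs a regularity hypothesis such as $\frac{dh}{dx}(x) \neq 0$ on $\partial\S$, which the paper leaves implicit; the ``only if'' direction survives without it, since the tangent cone is always contained in the half-space by the first-order expansion along tangent sequences, just as you argue.

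The one genuine flaw is your second fallback: treating degenerate boundary points as ``a separate, measure-zero case that does not affect invariance'' would fail. Forward invariance is a pointwise property of the boundary --- a single degenerate point through which a trajectory escapes destroys it, no matter how small the degenerate set. Concretely, take $h(x) = -x^2$ on $\reals$, so that by (\ref{eqn:setDef}--\ref{eqn:setDefIntS}) $\S = \partial\S = \{0\}$ (closed, non-empty, simply connected), with $\frac{dh}{dx}(0) = 0$; for the dynamics $\dot x = 1$ (i.e.\ $g \equiv 0$) condition (\ref{eqn:nagumo}) holds trivially since $L_f h(0) = 0$, yet the trajectory $x(t) = t$ leaves $\S$ immediately. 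Here the half-space at the degenerate point is all of $\reals$ while the true tangent cone is $\{0\}$, which is precisely where your first-order characterization breaks. So the regular-value assumption is not merely the cleanest of your two options --- it is the only one that works; with it in place, your proof is complete and coincides with the paper's (implicit) argument.
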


If $h$ is a Zeroing Control Barrier Function (ZCBF), a controller that renders $\S$ forward invariant is guaranteed to exist~\cite{amesTAC2017, Ames2019}. The focus of this paper is on scenarios where $h$ is not a ZCBF. In the following section, we propose Input Constrained Control Barrier Functions (ICCBFs) to address these cases. 

\section{Input Constrained Control Barrier Functions}

In this section we define Input Constrained Control Barrier Functions (ICCBFs). A brief motivation for the construction is provided, followed by the formal definitions. Finally a method to find ICCBFs is proposed. 

\subsection{Motivation}

\begin{figure}
    \centering
    \includegraphics[width=0.8\linewidth]{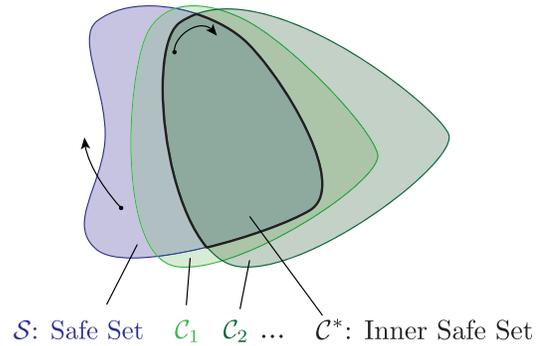}
    \caption{Visual representation of ICCBF method. The safe set $\S$ and two intermediate sets $\C_1$ and $\C_2$ are drawn. The final inner safe set $\C^*$ is the intersection of each of these sets, and can be rendered forward invariant. }
    \label{fig:SandC}
\end{figure}

Suppose the safe set $\S$ associated with $h$ cannot be rendered forward invariant by any controller $u\in \U$. Then we define a function $b_1 : \X \rightarrow \reals$ and a set $\C_1$ (visualised in Figure~\ref{fig:SandC}) as follows
\eqn{
b_1(x) &= \inf_{u\in \U} [\dot h(x, u) + \alpha_0(h(x))]\label{eqn:motivatingb1}\\
\C_1 &= \{ x \in \X : b_1(x) \geq 0\} 
}
where $\alpha_0$ is some user specified extended \classK function and the shorthand $\dot h(x, u) = L_fh(x) + L_gh(x) u$ is used. Notice that since an infimum over $\U$ is taken, $b_1$ is only dependent on the state $x$, and not the input $u$. 

The set $\C_1$ has a useful property: Suppose there exists a point $x\in \X$  such that $x \in \partial \S$ and $x \in \C_1$, i.e., $h(x) = 0$ and $b_1(x) \geq 0$. Then, from~(\ref{eqn:motivatingb1}) it follows, $\inf_{u\in \U} [\dot h] \geq 0$, and therefore $\dot h \geq 0$ \emph{for any} $u \in \U$, i.e.,
\eqn{
x \in \partial \S \cap \C_1 \implies \dot h(x, u) \geq 0, \forall u \in \U.\label{eqn:motivatingProperty}
}
Thus the closed-loop vector field $f(x) + g(x) u$ lies in the tangent cone of $\S$ at $x$, i.e., no closed-loop trajectory can leave $\S$ through such an $x$.

Now suppose there exists a Lipschitz continuous controller $u\in \U$ such that $\C_1$ is forward invariant. Then the set $\S \cap \C_1$ is also forward invariant: Consider an initial condition $x(0) \in \S \cap \C_1$. Suppose there exists a $t >0$ where $h(x(t)) < 0$. By continuity, there must be some $\tau \in [0, t)$ where $h(x(\tau)) = 0$ and $\dot h(x(\tau), u) < 0$. However, since the controller renders $\C_1$ forward invariant, we have $x(\tau) \in \partial S$ and $x(\tau) \in \C_1$. By~(\ref{eqn:motivatingProperty}), $\dot h(x(\tau), u) \geq 0$, which is a contradiction. Therefore, trajectories cannot reach a state $x \not\in \S$. 

To summarise, in systems where $\S$ cannot be rendered forward invariant, the subset set ${\S \cap \C_1}$ is forward invariant under any controller that renders $\C_1$ forward invariant.

In defining $b_1$, we allowed the user to specify the extended \classK function $\alpha_0$. However not all \classK functions will admit a controller that renders $\C_1$ forward invariant. Either a different \classK function could be used, or we can repeat the same steps: define $b_2(x) = \inf_{u \in \U} [\dot b_1(x, u) + \alpha_1(b_1(x))]$ and $\C_2 = \{ x \in \X: b_2(x) \geq 0\}$. Now any controller $u$ that renders $\C_2$ forward invariant also renders $\C_1 \cap \C_2$ forward invariant, and therefore the set $\S \cap \C_1 \cap \C_2$ is also forward invariant by the same controller. This idea is formalised in the next subsection.

\subsection{ICCBFs}
\label{sec:ICCBFs}

Consider the dynamical system~(\ref{eqn:sys}) with bounded control inputs $u\in \U$ and a safe set $\S$ defined by a function $h: \X \rightarrow \reals$, as per~(\ref{eqn:setDef}-\ref{eqn:setDefIntS}). Assume $h$ is not a ZCBF. We define the following sequence of functions:
\eqn{
b_0(x) &= h(x) \label{eqn:b0def}\\
b_1(x) &= \inf_{u\in \U} [L_fb_0(x) + L_gb_0(x) u + \alpha_0(b_0(x))] \\
b_2(x) &= \inf_{u\in \U} [L_fb_1(x) + L_gb_1(x) u + \alpha_1(b_1(x))] \label{eqn:b1def}\\
\vdots \nonumber\\
b_{i+1}(x) &= \inf_{u\in \U} [L_fb_i(x) + L_gb_i(x) u + \alpha_i(b_i(x))] \label{eqn:bidef}\\
\vdots \nonumber\\
b_N(x) &= \inf_{u\in \U} [L_fb_{N-1}(x) + L_gb_{N-1}(x) u \nonumber\\&{}\quad{}\quad{}\quad{}\quad + \alpha_{N-1}(b_{N-1}(x))] \label{eqn:bNdef}}
where each $\alpha_i$ is some user-specified \classK function, and $N$ is a positive integer.  We assume the functions $f, g, h$ are sufficiently smooth such that $b_N$ and its derivative are defined. Each function $b_i : \X \rightarrow \reals$ is a scalar function that only depends on the state. The time derivative $\dot b_i = \frac{db_i}{dx} \dot x =  L_fb_i(x) + L_gb_i(x) u$ is still affine in $u$. 

\begin{remark}
\label{remark:HOCBFcomp}
Higher Order CBFs, as in~\cite{hocbf}, are a special case of ICCBFs. For instance, in systems of relative degree 2, $L_gh(x) = 0$ for all $x \in \S$. In this case, in the construction of ICCBFs we have
\eqn{
b_1(x) &= \inf_{u\in \U} [ L_fh(x) + L_gh(x) u +\alpha_0(h(x))]\\
&= \inf_{u\in \U} [ L_fh(x)  +\alpha_0(h(x))]\\
&= L_fh(x)  + \alpha_0(h(x))}
which is exactly the function defined in~\cite{hocbf}. This repeats for any relative degree greater than~2, and thus for a system with relative degree $\rho$, the first $\rho$ expressions of ICCBFs are identical to those of HOCBFs. Moreover, ICCBFs can handle systems with non-uniform relative degree, by choosing $N$ greater or equal to the largest relative degree of the system in $\S$. 
\end{remark}

Next, we define a family of sets, 
\eqn{
\C_0 &= \{ x \in \X : b_0(x) \geq 0\} = \S \label{eqn:C_0} \\
\C_1 &= \{ x \in \X : b_1(x) \geq 0\} \label{eqn:C_1}\\
&\vdots \nonumber\\
\C_{i} &= \{ x \in \X : b_{i}(x) \geq 0\} \label{eqn:C_i}\\
&\vdots \nonumber\\
\C_N &= \{ x \in \X : b_N(x) \geq 0\}\label{eqn:C_N}
}
and the intersection of these sets 
\eqn{
\C^* = \C_0 \cap \C_1 \cap ... \cap \C_N \label{eqn:C}}
We assume the set $\C^*$ is closed, non-empty and has no isolated points.

\begin{definition}
For the above construction, if there exists a \classK function $\alpha_N$ such that
\eqn{
\sup_{u\in \U}[L_fb_N(x) + L_gb_N(x) u + \alpha_N(b_N(x))] \geq 0 \quad \forall x\in \C^*,\label{eqn:ICCBFdef}}
then $b_N$ is an \emph{Input Constrained Control Barrier Function (ICCBF)}. 
\end{definition}

Note, this does not require $b_N$ to be a ZCBF on $\C_N$. The definition only requires condition~(\ref{eqn:ICCBFdef}) to hold for all $x \in \C^*$ which is a subset of $\C_N$. The main result of this paper can now be stated:
\begin{theorem}[Main Result]
\label{theorem:main}
Given the input constrained dynamical system (\ref{eqn:sys}), if $b_N$, defined by (\ref{eqn:b0def} - \ref{eqn:bNdef}), is an ICCBF, then any Lipschitz continuous controller $u : \C^* \rightarrow \U$ such that $u(x) \in \KICCBF(x)$, where
\eqn{
\KICCBF(x) &=\{ u \in \U: \nonumber\\
& L_fb_N(x) + L_gb_N(x) u \geq -\alpha_N(b_N(x)) \}\label{eqn:KICCBF}}
renders the set $\C^* \subseteq \S$~(\ref{eqn:C}) forward invariant.
\end{theorem}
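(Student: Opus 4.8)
The plan is to argue by contradiction with a first-exit-time argument, extending the single-barrier reasoning used in the motivation above to the whole chain $b_0,\dots,b_N$ at once. The first step I would carry out is to record the family of differential inequalities that the construction forces along any closed-loop trajectory $x(\cdot)$. For each $i<N$, the identity $b_{i+1}(x)=\inf_{u\in\U}[L_fb_i(x)+L_gb_i(x)u+\alpha_i(b_i(x))]$ shows that membership $x\in\C_{i+1}$, i.e. $b_{i+1}(x)\ge 0$, is equivalent to $L_fb_i(x)+L_gb_i(x)u\ge -\alpha_i(b_i(x))$ for every $u\in\U$, hence in particular for $u=u(x)$. For the top index, whenever $x\in\C^*$ the constraint $u(x)\in\KICCBF(x)$ is exactly $L_fb_N(x)+L_gb_N(x)u(x)\ge -\alpha_N(b_N(x))$. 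Therefore, as long as $x\in\C^*$, every $b_i$ obeys $\dot b_i\ge -\alpha_i(b_i)$ along the flow, and since each $\alpha_i$ is \classK with $\alpha_i(0)=0$, at any state where a constraint is active ($b_i=0$) this reads $\dot b_i\ge 0$.

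Next I would set up the contradiction. Fix $x(0)\in\C^*$; Lipschitz continuity of $u$ guarantees a unique closed-loop solution, so $x(\cdot)$ is well defined and continuous. Assuming the trajectory leaves $\C^*$, let $\tau=\inf\{t\ge 0: x(t)\notin\C^*\}$. Since $\C^*$ is closed and $x(\cdot)$ continuous, $x(\tau)\in\C^*$ and $b_i(x(t))\ge 0$ for all $i$ and all $t\in[0,\tau]$. Let $A=\{i: b_i(x(\tau))=0\}$ be the active set at the exit. Constraints with $b_i(x(\tau))>0$ stay positive on some interval $[\tau,\tau+\delta]$ by continuity, so a violation just after $\tau$ can only occur through an active constraint. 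Because $x(\tau)\in\C^*$, the inequalities of the first step hold at $\tau$, giving $\dot b_i(x(\tau),u(x(\tau)))\ge 0$ for every $i\in A$.

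To turn these nonnegative boundary derivatives into actual non-exit --- the place where a naive sign argument is insufficient when some $\dot b_i(x(\tau))=0$ --- I would appeal to Nagumo's theorem (Lemma~1). The computation above shows that at the boundary point $x(\tau)$ the closed-loop vector field $f(x(\tau))+g(x(\tau))u(x(\tau))$ satisfies $\nabla b_i(x(\tau))\cdot(f+gu)\ge 0$ for every active $i$, i.e. it lies in the cone cut out by the active constraint gradients; identifying this cone with the tangent cone $\hat T_{\C^*}(x(\tau))$ and repeating the argument at every $x\in\partial\C^*$ places the vector field in $\hat T_{\C^*}$ along the entire boundary, so Lemma~1 yields forward invariance of $\C^*$ and contradicts the existence of a finite exit time $\tau$.

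The step I expect to be the main obstacle is the passage from the pointwise gradient inequalities to membership in the \emph{true} tangent cone of the intersection $\C^*=\bigcap_{i=0}^N\C_i$. At a boundary state where several constraints are simultaneously active, the algebraic cone $\{v:\nabla b_i(x)v\ge 0,\ i\in A\}$ contains the geometric tangent cone but may strictly exceed it without a constraint qualification, and Nagumo's sufficiency requires the geometric cone; this is presumably where the standing assumption that $\C^*$ is closed and has no isolated points, together with smoothness of $f,g,h$, must be invoked to rule out degenerate boundary geometry. A related delicacy, which an alternative comparison-lemma proof would have to confront directly, is that the ICCBF inequality for $b_N$ is guaranteed only on $\C^*$ and not on a neighborhood of it, so one cannot simply propagate $\dot b_N\ge-\alpha_N(b_N)$ across the exit time; the tangent-cone formulation is attractive precisely because it only uses the condition at boundary points of $\C^*$, all of which lie in $\C^*$.
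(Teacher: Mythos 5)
Your proposal is correct and takes essentially the same route as the paper's proof: at each boundary point of $\C^*$ you establish $\dot b_i(x,u(x))\ge 0$ for every active index $i$ --- via $\C^*\subseteq\C_{i+1}$ and the infimum definition of $b_{i+1}$ for $i<N$, and via the ICCBF condition together with $u(x)\in\KICCBF(x)$ for $i=N$ --- and then invoke Nagumo's theorem, with your first-exit-time contradiction wrapper being redundant (forward invariance follows directly from Nagumo) but harmless. The constraint-qualification subtlety you flag, namely that the algebraic cone $\{v:\nabla b_i(x)\,v\ge 0,\ i\in\mathcal{I}(x)\}$ may strictly contain the geometric tangent cone $\hat T_{\C^*}(x)$ at points where several constraints are simultaneously active, is genuine, and the paper's proof passes over it just as silently, asserting the tangent-cone condition from the pointwise inequalities without further justification.
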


\begin{proof}
Since $u$ is a Lipschitz continuous controller, the closed-loop system~(\ref{eqn:sys}) is also Lipschitz continuous. To show forward invariance of $\C^*$, we use Nagumo's theorem on the closed-loop system. In particular, we show
\eqn{
&[f(x) + g(x) u(x)] \in \hat{T}_{\C^*}(x), \nonumber \\
&\quad \forall u(x) \in \KICCBF(x), \text{ and } \forall x \in \partial\C^*,\label{eqn:tangentConeCond}
}
where $\hat{T}_{\C^*}(x)$ is the tangent cone of $\C^*$ at $x$. At each $x \in \partial \C^*$, we have $b_i(x) = 0$ for some set, possibly all, of $i$'s in $\{0, ..., N\}$. Let $\mathcal{I}(x) = \{ i : b_i(x) = 0\}$. Condition~(\ref{eqn:tangentConeCond}) is satisfied if, $\dot b_i(x, u(x)) \geq 0$, for all $i \in \mathcal{I}(x)$, for all $x \in \C^*$, provided $u(x) \in \KICCBF(x)$. In other words, we show that for all $x \in \C^*$,
\eqn{
b_i(x) = 0 \implies \dot b_i(x, u(x)) \geq 0, \forall u(x) \in \KICCBF(x)\label{eqn:positivity}
}
for all $i \in \{ 0, ..., N\}$.

\emph{Cases $i\in \{0, ..., N-1\}$}: Consider any $x \in \C^* \cap \partial\C_i$. Since $\C^* \subseteq \C_{i+1}$, $x\in\partial\C_i \cap \C_{i+1}$. 
By (\ref{eqn:bidef}, \ref{eqn:C_i}),
\eqnN{
\inf_{u\in\U} [L_fb_i(x) + L_gb_i(x)] &\geq 0\\
\therefore L_fb_i(x) + L_gb_i(x) &\geq 0, \quad \forall u\in \U
}
and therefore $\dot b_i(x, u) \geq 0$, for all $u \in \U$. Since $\KICCBF(x) \subseteq \U$, condition~(\ref{eqn:positivity}) is satisfied.

\emph{Case $i = N$}: Consider $x \in \C^* \cap \partial \C_N$. Since $b_N$ is an ICCBF and $b_N(x) = 0$, by~(\ref{eqn:C_N}, \ref{eqn:KICCBF}),
\eqnN{
L_fb_N(x) + L_gb_N(x) u(x) \geq 0, \quad \forall u(x) \in \KICCBF(x)
}
and therefore $\dot b_N(x, u) \geq 0$ for any $u(x) \in \KICCBF(x)$.

In conclusion, we have shown that condition~(\ref{eqn:positivity}) is satisfied for all $i$, and therefore the conditions of Nagumo's theorem are satisfied, which completes the proof.\end{proof}

\begin{remark}
The practical value of this construction is that for a given system, a set $\S$ of safe states of practical importance can be specified, which may not be rendered forward invariant under the given system dynamics. By using ICCBFs, we remove states of the set $\S$, and construct an inner set for which we can find a controller that renders it forward invariant.
\end{remark}

We would like to note a useful special case, the \emph{simple ICCBF}:
\begin{definition}
In the above construction, if $\C^*$ is a strict subset of $\C_N$, i.e., $\C^* \subset \C_N$, then $b_N$ is a \emph{simple ICCBF}.  \label{def:trivialICCBF}
\end{definition}

\begin{theorem}
\label{theorem:trivialICCBF}
For the dynamical system~(\ref{eqn:sys}), if $b_N$ is a simple ICCBF, all Lipschitz continuous controllers $u(x)\in \U$ render the set $\C^*$ forward invariant.
\end{theorem}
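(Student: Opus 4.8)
The plan is to reuse the Nagumo-theorem argument from the proof of Theorem~\ref{theorem:main}, exploiting the observation that in that proof the only index for which the admissible inputs had to be restricted to $\KICCBF(x)$ was $i = N$; for every index $i \in \{0,\dots,N-1\}$ the infimum construction already certified $\dot b_i(x,u) \geq 0$ for \emph{all} $u \in \U$. So I would first recall that forward invariance of $\C^*$ under a Lipschitz controller $u(x) \in \U$ is equivalent, by Nagumo's theorem applied to the closed loop, to $[f(x)+g(x)u(x)] \in \hat T_{\C^*}(x)$ for every $x \in \partial\C^*$, and that this is implied by $\dot b_i(x,u(x)) \geq 0$ for every active index $i \in \mathcal{I}(x) = \{ i : b_i(x) = 0\}$ at each such $x$.

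Next I would isolate the crux: showing that the simple-ICCBF hypothesis $\C^* \subset \C_N$ forces $N \notin \mathcal{I}(x)$ for every $x \in \partial\C^*$. Concretely, because $\C^*$ is a closed set lying strictly inside $\C_N$, its boundary never meets $\partial\C_N$, i.e. $b_N(x) > 0$ for all $x \in \partial\C^*$, so the active set there satisfies $\mathcal{I}(x) \subseteq \{0,\dots,N-1\}$. I would then invoke, verbatim from the $i \in \{0,\dots,N-1\}$ cases of Theorem~\ref{theorem:main}, the fact that for any such $i$ and any $x \in \partial\C_i$ with $x \in \C_{i+1}$ (which holds since $\C^* \subseteq \C_{i+1}$), the infimum defining $b_{i+1}$ in (\ref{eqn:bidef}) being nonnegative at $x$, together with $b_i(x)=0$ so that $\alpha_i(b_i(x))=0$, yields $L_f b_i(x) + L_g b_i(x) u \geq 0$, hence $\dot b_i(x,u) \geq 0$, for \emph{every} $u \in \U$. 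No restriction to $\KICCBF(x)$ is required.

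Combining these, at every $x \in \partial\C^*$ each active constraint $i \in \mathcal{I}(x)$ satisfies $\dot b_i(x,u(x)) \geq 0$ for \emph{all} $u(x) \in \U$, so Nagumo's condition holds for any Lipschitz controller valued in $\U$, which is exactly the claim.

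The main obstacle is the middle step: pinning down precisely why strict containment removes $N$ from the active set on $\partial\C^*$. I expect this to hinge on reading $\C^* \subset \C_N$ as $\C^* \subseteq \Int(\C_N)$, equivalently $b_N > 0$ on all of $\C^*$, rather than as mere proper set inclusion; a proper subset that still shared boundary points with $\partial\C_N$ would reintroduce the index $N$ into $\mathcal{I}(x)$ and the argument would again require the ICCBF condition and restriction to $\KICCBF(x)$. I would therefore make this interpretation explicit and, using continuity of $b_N$ and the standing assumptions that $\C^*$ is closed, non-empty and has no isolated points, verify that $b_N(x)>0$ indeed holds on $\partial\C^*$ before concluding.
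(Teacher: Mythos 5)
Your proposal is correct and follows essentially the same route as the paper's proof: invoke the case analysis of Theorem~\ref{theorem:main}, observe that the simple-ICCBF hypothesis gives $\C^* \cap \partial\C_N = \emptyset$ so the case $i = N$ never arises, and note that the remaining cases $i \in \{0,\dots,N-1\}$ hold for \emph{every} $u \in \U$ with no restriction to $\KICCBF(x)$. Your closing caveat is moreover well taken: the paper itself passes directly from ``$\C^*$ is a strict subset of $\C_N$'' to ``$\C^* \cap \partial\C_N = \emptyset$,'' an inference that fails for mere proper inclusion (e.g.\ $\C^* = [0,1] \subset [0,2] = \C_N$ in $\reals$ shares the boundary point $0$) and is valid only under your reading $\C^* \subseteq \Int(\C_N)$, i.e.\ $b_N > 0$ on $\C^*$, so making that interpretation explicit tightens a step the paper leaves implicit.
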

\begin{proof}
By definition, since $b_N$ is a simple ICCBF, $\C^*$ is a strict subset of $\C_N$. Then $\C^* \cap \partial \C_N = \emptyset$, i.e., there does not exist a $x\in\C^*$ such that $b_N(x) = 0$. Following Theorem~\ref{theorem:main}, we do not need to consider case where $i = N$ in condition~(\ref{eqn:positivity}). The remaining cases, with $i\in\{0, ..., N-1\}$ satisfy condition~(\ref{eqn:positivity}) for all $u(x) \in \KICCBF(x)$. Therefore, any Lipschitz continuous $u\in \U$ admits globally unique solutions and satisfies condition~(\ref{eqn:tangentConeCond}), completing the proof.
\end{proof}

\begin{remark}
Intuitively, the existence of a simple ICCBF represents a system where the dynamics at the boundaries of $\C^*$ are such that the unforced dynamics $f(x)$ dominate the forcing term $g(x) u$ in driving the system towards safety. If a simple ICCBF is found, no safety critical controller is needed for the system to ensure state trajectories remain within the safe set, provided the system is initialised within~$\C^*$. 
\end{remark}

\subsection{QP-based Control Synthesis}
\label{sec:ICCBF-QP}

Finally, similar to~\cite{amesTAC2017, ASIF}, we define an optimization-based controller that will render $\C^* \subseteq \S$ forward invariant. In this section, suppose the set $\U$ can be expressed as a convex polytope, i.e., $\U = \{ u \in \reals^m : A u \leq B\}$ where $A \in \reals^{p\times m}$ and $B \in \reals^p$ for some positive integer $p$. Then the a controller based on the solution to the following Quadratic Program (QP) renders the set $\C^*$ forward invariant.

\begin{theorem}
Consider the dynamical system~(\ref{eqn:sys}). Suppose $b_N$ is an ICCBF associated with the \classK function $\alpha_N$ and inner safe set $\C^*$. Let $H: \X \to \reals^{m\times m}_+$,  where $\reals^{m\times m}_+$ is the set of real $m\times m$ positive definite matrices, and $F: \X \to \reals^{m}$ be Lipschitz continuous cost functions. The solution to the following quadratic optimization problem
\eqn{
\begin{array}{rl}
u^*(x) = \underset{u \in \reals^m}{\argmin} & \half u^T H(x) u + F(x)^T u \\
\text{subject to} & L_fb_N(x) + L_gb_N(x) u \geq - \alpha_N(b_N(x))\\
& u \in \U
\end{array}\label{eqn:ICCBF-QP}
}
yields an input-constrained feedback controller $u^*: \C^* \rightarrow \U$ that renders $\C^*$ forward invariant. Thus, $h(x(t)) \geq 0$ for all $t \geq t_0$, i.e. the system trajectories always evolve within the safe set $\S$. 
\end{theorem}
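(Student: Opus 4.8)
The plan is to reduce this statement to Theorem~\ref{theorem:main} by verifying that the QP~(\ref{eqn:ICCBF-QP}) defines a well-posed, Lipschitz continuous feedback law $u^*$ whose values lie in $\KICCBF(x)$. I would organize the argument into four steps: feasibility, uniqueness, membership in $\KICCBF$, and regularity of the solution map.

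First I would establish feasibility. Since $b_N$ is an ICCBF, the defining condition~(\ref{eqn:ICCBFdef}) guarantees that for every $x \in \C^*$ there exists some $u \in \U$ satisfying $L_fb_N(x) + L_gb_N(x) u + \alpha_N(b_N(x)) \geq 0$. Hence the feasible set of the QP, which is exactly $\KICCBF(x)$ since $\KICCBF(x) \subseteq \U$ already incorporates the input constraint by~(\ref{eqn:KICCBF}), is non-empty for all $x \in \C^*$. Because $H(x)$ is positive definite, the objective is strictly convex, so the minimizer $u^*(x)$ exists and is unique; thus $u^*: \C^* \to \U$ is a well-defined feedback controller. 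Moreover, since every feasible point lies in $\KICCBF(x)$, so does $u^*(x)$, which immediately supplies the condition $u^*(x) \in \KICCBF(x)$ demanded by Theorem~\ref{theorem:main}.

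The remaining, and principal, difficulty is to show that $u^*$ is Lipschitz continuous on $\C^*$, which is required both for the closed-loop system to admit globally unique solutions and for the hypotheses of Theorem~\ref{theorem:main} to be met. Here I would invoke a standard regularity result for parametric quadratic programs: for a strongly convex QP whose cost and constraint data depend Lipschitz-continuously on the parameter $x$, and for which a constraint qualification holds, the optimal solution map is locally Lipschitz. Strong convexity is provided by the positive definiteness of $H(x)$; the Lipschitz dependence of the data follows from the assumed Lipschitz continuity of $H$ and $F$ together with the smoothness assumptions on $f$, $g$, $h$ stated in Section~\ref{sec:ICCBFs} (which make $L_fb_N$, $L_gb_N$, and $\alpha_N \circ b_N$ Lipschitz in $x$). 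The technical crux is verifying the constraint qualification, i.e., ensuring the active-constraint gradients behave regularly across the boundaries where the ICCBF inequality or the polytope faces $A u \leq B$ switch between active and inactive; the polytopic structure of $\U$ together with a mild non-degeneracy assumption should rule out the pathological kinks that would otherwise break Lipschitzness.

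Finally, with $u^*$ shown to be Lipschitz continuous and to satisfy $u^*(x) \in \KICCBF(x)$ for all $x \in \C^*$, Theorem~\ref{theorem:main} applies directly and renders $\C^*$ forward invariant. The concluding claim $h(x(t)) \geq 0$ then follows because $\C^* \subseteq \C_0 = \S = \{x : h(x) \geq 0\}$ by~(\ref{eqn:C_0}) and~(\ref{eqn:C}); forward invariance of $\C^*$ keeps every trajectory inside $\C^*$, hence inside $\S$, so $h(x(t)) \geq 0$ for all $t \geq t_0$.
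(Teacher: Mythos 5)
Your proposal is correct and follows essentially the same route as the paper: both arguments reduce the theorem to Theorem~\ref{theorem:main} by observing that the QP constraints force $u^*(x) \in \KICCBF(x)$ and that the solution map is Lipschitz continuous, the paper disposing of the latter point by citing \cite[Thm.~8]{robustnessXu} where you instead sketch the standard parametric-QP regularity argument. Your added details---feasibility of the QP from the ICCBF condition~(\ref{eqn:ICCBFdef}), uniqueness from positive definiteness of $H(x)$, and the closing observation that $\C^* \subseteq \C_0 = \S$ yields $h(x(t)) \geq 0$---are correct elaborations of steps the paper's three-line proof leaves implicit.
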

\begin{proof}
The constraints in (\ref{eqn:ICCBF-QP}) specify that $u^*(x) \in \KICCBF(x)$. Since the $u^*(x)$ is a Lipschitz continuous controller, (following the same arguments as~\cite[Thm.~8]{robustnessXu}), by Theorem~\ref{theorem:main}, the controller $u^*(x)$ obtained as the solution to the optimization problem~(\ref{eqn:ICCBF-QP}) renders $\C^*$ forward invariant.
\end{proof}

\begin{remark}
Notice the final optimization-based controller only requires $b_N$ and its derivatives. This means the computational complexity of the controller is almost identical to the CBF-QP based controllers of~\cite{ames2014,amesTAC2017}.~\footnote{Computing $b_N$ and its derivative can involve long expressions. In our implementations, we used \texttt{Mathematica}~\cite{mathematica} for symbolic manipulation, for the ACC case study, and \texttt{Julia}'s auto-differentiation package \texttt{ForwardDiff.jl}~\cite{bezanson2017julia, forwardDiff}, for the autonomous rendezvous case study.}
\end{remark}

To conclude, we have introduced the notion of an ICCBF. Once an ICCBF is found, the inner safe set $\C^* \subseteq \S$ given by~(\ref{eqn:C}) and the quadratic-program based feedback controller $u^* : \C^* \to \U$ given by~(\ref{eqn:ICCBF-QP})  renders the $\C^*$ forward invariant. In this way, ICCBFs address Problem 1. 

\section{Simulations}

\subsection{Adaptive Cruise Control}

\begin{figure*}
    \centering
    \includegraphics[width=\linewidth]{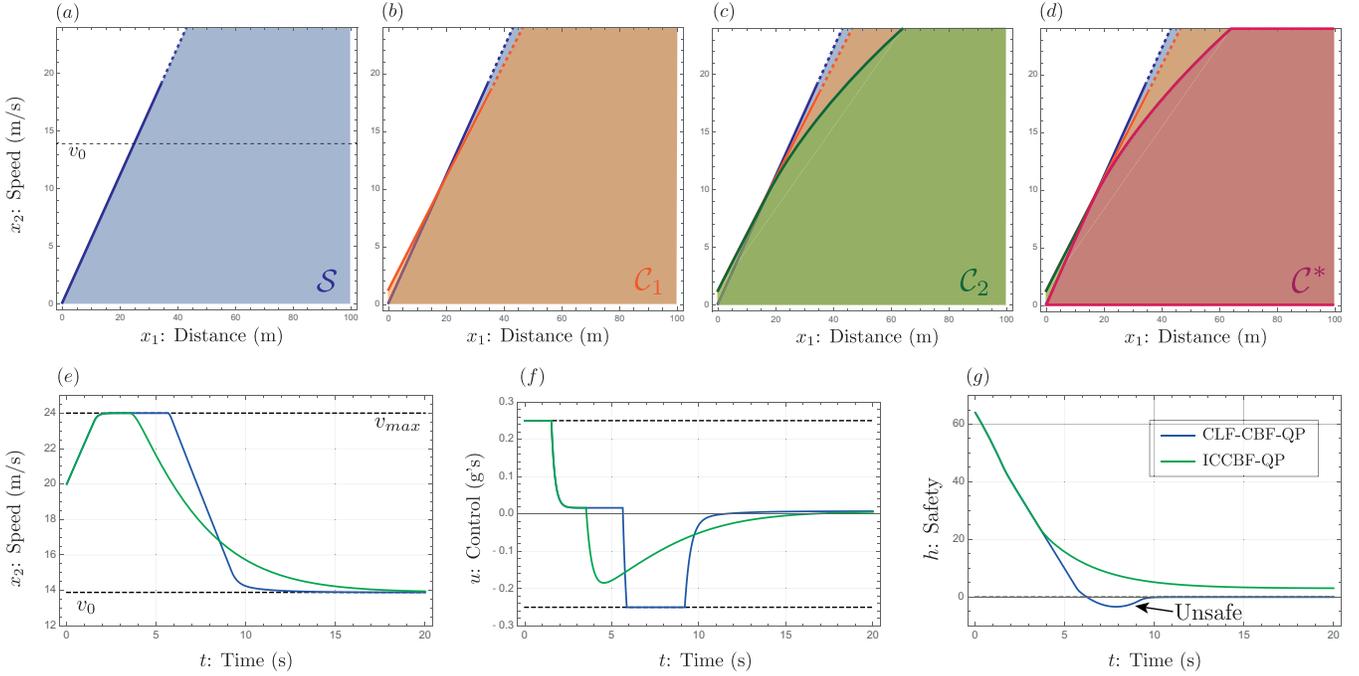}
    \caption{Figures (a-d): State-space diagrams indicating the sets (a)~$\S$, (b)~$\C_1$, (c)~$\C_2$ and (d)~$\C^*$. The horizontal dashed line in (a) indicates $v_0$, the speed of the car in-front. Figure~(d) represents the inner safe set $\C^*$ that is rendered forward invariant. Figures (e-g): Simulation results for speed, control input and safety under the CLF-CBF-QP controller~\cite{ames2014} and the ICCBF-QP.}
    \label{fig:sets_acc}
\end{figure*}

As a demonstration, we apply ICCBFs to the Adaptive Cruise Control (ACC) problem of~\cite{ames2014}. Consider a point-mass model of a vehicle moving in a straight line. The vehicle is following a vehicle in-front, which moves at a known constant speed $v_0$. The objective is to design a controller which will prevent the vehicle from colliding with the vehicle in-front, but will allow it to accelerate to the speed limit when it is safe to do so.

Using the model defined in~\cite{ames2014}, the safety constraint is specified as $d \geq 1.8 v$, where $d$ is the distance to the vehicle in-front, and $v$ is the speed of the vehicle.  Defining the state of the system $x = [d, v]^T$, the dynamical model is
\eqnN{
\bmat{\dot d\\ \dot v} = \bmat{v_0 - v\\ -F(v)/m} + \bmat{0\\g_0}u
}
where $F(v) = f_0 + f_1 v + f_2 v^2$ models resistive forces on the vehicle, $m$ is the mass of the vehicle, $g_0$ is acceleration due to the gravity. The control input $u$ is the acceleration of the vehicle, and we suppose it is constrained to
\eqnN{\U = \{ u : -0.25 \leq u \leq 0.25 \}}
which represents a maximum deceleration of $0.25g$'s. The safe set $\S$ is defined by the function $h$,
\eqnN{\S &= \{x \in \X : h(x) \geq 0\}\label{eqn:ACC_S}\\
h(x) &= x_1 - 1.8 x_2}
and we can verify that $\S$ is not forward invariant under the input constraints. Thus, $h$ is not a ZCBF, and we will apply ICCBFs to find an inner safe set. 

We choose, arbitrarily, $N = 2$ and the \classK functions 
\eqnN{
\alpha_0(h) = 4 h, \quad \alpha_1(h) = 7 \sqrt{h}, \quad \alpha_2(h) = 2h,
}
to define the functions $b_1, b_2$ and sets $\C_1, \C_2$.

The sets are visualized in Figure~\ref{fig:sets_acc}. The interior of a set is shaded, and the boundary of the set is indicated with a thick line. Each boundary is partitioned into two subsets: the subset where there exists a feasible control input to keep trajectories within the set are indicated with a solid line, and the subset where no feasible control input will keep trajectories within the set are indicated with a dashed line.

To verify that $b_2$ is an ICCBF, the following optimization problem was used:
\eqnN{
\begin{array}{rl}
\gamma = \underset{x \in \X}{\minimise}  & \sup_{u\in\U} [\dot b_N(x, u) + \alpha_N(b_N(x))] \\
\text{subject to} & x \in \C^*
\end{array}
}

By the definition of ICCBFs, $b_N$ is an ICCBF if and only if the optimization problem is feasible, with solution $\gamma \geq 0$.  While this optimization problem is, in general, non-linear and non-convex, a standard non-linear optimization software can be used to solve it. Applied to the ACC problem, we determined $\gamma = 2.33 \geq 0$, and therefore $b_2$ is a valid ICCBF.

Finally, we define $\C^*$ as the intersection of $\S, \C_1, \C_2$. This set is visualised in Figure~\ref{fig:sets_acc}(d). The following controller renders the set forward invariant:
\eqnN{
\begin{array}{rl}
u^*(x) = \underset{u \in \reals}{\text{argmin}} & \half (u-u_d(x))^2\\
\text{subject to} & L_fb_2(x) + L_gb_2(x) u \geq - 2 b_2(x)\\
& u \in \U
\end{array}
}
where $u_d(x)$ is the desired acceleration. The desired acceleration is computed using the Control Lyapunov Function $V(x) = (x_2 - v_{max})^2$, where $v_{max}=24$ is the speed limit. Thus, $u_d(x)$:
\eqnN{
L_fV(x) + L_gV(x) u_d(x) = -10 V(x)}

The inner safe set $\C^*$ and the input-constrained feedback controller $u^*$ represent the solution to Problem 1 for the ACC problem. 

We compare our controller to the CLF-CBF-QP controller of~\cite{ames2014}:
\eqnN{
\begin{array}{rl}
\underset{u \in \reals, \delta\in \reals_+}{\text{argmin}} & \half u^2 + 0.1 \delta^2\\
\text{subject to} & L_fV(x) + L_gV(x) \leq -10 V(x) + \delta\\
& L_fh(x) + L_gh(x) u \geq - 2h(x)
\end{array}
}
and clip of the solutions of the QP such that $u^*(x)$ lies in the range of feasible control inputs. 

In Figures~\ref{fig:sets_acc}~(e-g), the proposed controller (green) is compared to the CLF-CBF-QP controller (blue), proposed in~\cite{ames2014}. We can see the CLF-CBF-QP reaches the input-constraint at $t=5.9$~seconds. The input limits force the system to leave the safe set. The ICCBF-QP remains feasible and safe for the entire duration, by applying brakes early, at $t=2.9$~seconds, instead of $t=5.0$~seconds. Thus, the ICCBF-QP controller is able to keep the input-constrained system safe, where the CLF-CBF-QP method fails, because it does not consider input constraints.

\subsection{Autonomous Rendezvous}

\begin{figure*}
    \centering
    \includegraphics[width=\linewidth]{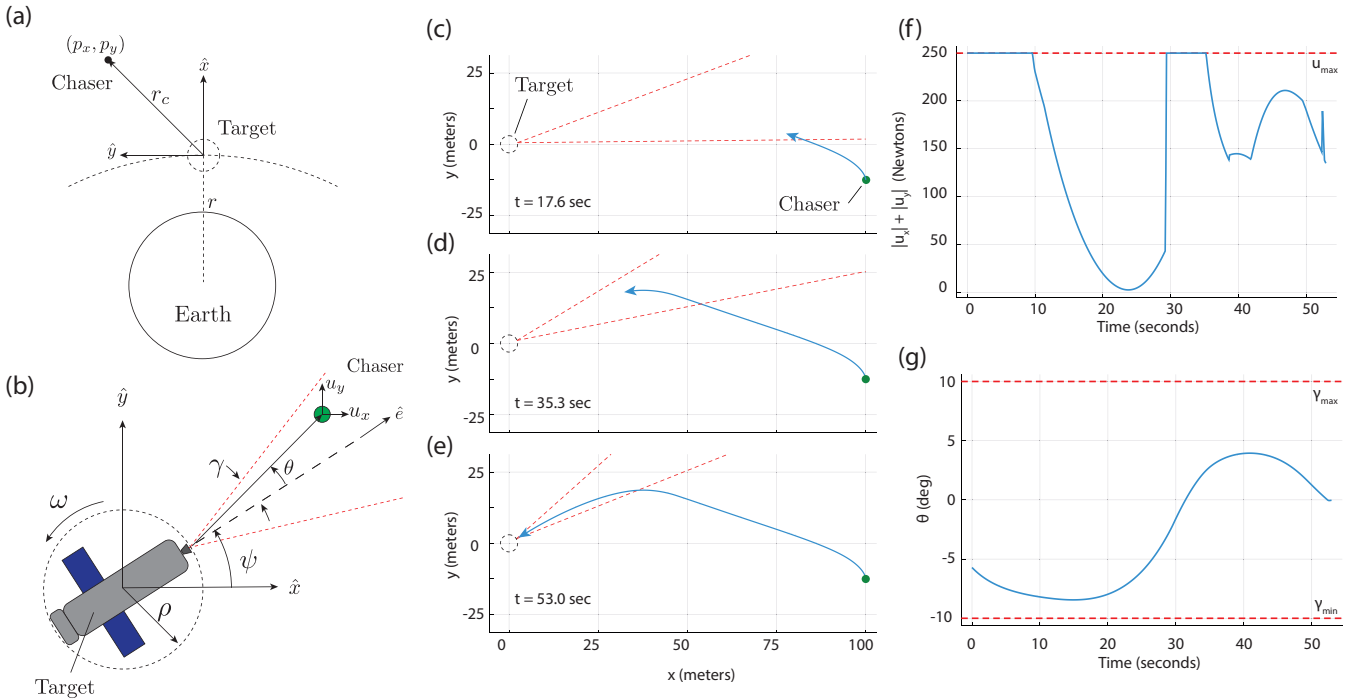}
    \caption{(a, b) Schematic of the rendezvous problem. (a) represents the Local-Vertical Local-Horizontal frame. (b) details the target and chaser spacecrafts. The target spacecraft is rotating with constant angular velocity $\omega$. The red dashed lines indicate the Line-of-Sight cone, which the chaser spacecraft must remain within. (c-e) show snapshots of the trajectory at three instances. The green dot represents the initial condition. (f) shows the 1-norm of the propulsive force and (g) indicates the line of sight angle $\theta$.}
    \label{fig:dock}
\end{figure*}

In this section, the ICCBF method is applied to the case study of an autonomous rendezvous operation between a chaser spacecraft modelled as a point mass, and a target body, for instance the International Space Station (ISS), see Figure~\ref{fig:dock}. The target spacecraft is rotating with a constant angular velocity relative to the Local-Vertical Local-Horizontal (LVLH) frame. The objective is to determine the appropriate propulsive forces to bring the chaser spacecraft from a range of 100~m to 3~m. The safety constraint is  to maintain a line-of-sight (LOS) constraint: the spacecraft's position must remain within a $\gamma = 10^\circ$ cone of the docking axis. This model is adapted from~\cite{docking}, where linearized Model-Predictive-Controllers (LMPC) were used. 

The target is modelled as a disk of radius $\rho=2.4$~m, with a constant angular velocity~$\omega=0.6^\circ$/sec relative to the local-vertical local-horizontal (LVLH) frame, and with a target docking port attached to the edge of the disk. The system state $x \in \reals^5$ is 5 dimensional consisting of the relative position $(p_x, p_y)$ and velocity $(v_x, v_y)$ of the spacecraft in the LVLH frame as the first four states, and angle of the ISS with respect to the LVLH frame $\psi$ as the last state.

Instead of using the linearised Clohessy-Wiltshire equations (as in \cite{docking}), we use the full nonlinear equations of relative motion. In this work, only gravitational forces due to the Earth and propulsive forces are modelled, but other non-linear effects like solar radiation pressure or air resistance can also be included. The dynamical model is then given by
\eqn{
\frac{d}{dt}\bmat{ p_x\\ p_y \\  v_x \\ v_y \\ \psi} = \bmat{
v_x \\
v_y \\
n^2 p_x + 2 n v_y + \frac{\mu}{r^2} - \frac{\mu(r+p_x)}{r_c^3}\\
n^2 p_y - 2 n v_x - \frac{\mu p_y} {r_c^3}\\
\omega
}
+ \frac{1}{m_c}\bmat{0 \\ 0 \\ u_x \\ u_y \\ 0}
}
where $r_c = \sqrt{x^2 + y^2}$ is the relative distance to the chaser, $r=6771$~km is the radius of orbit of the ISS, $\mu=398,600$~km${}^3$/s${}^2$ is the gravitational parameter of Earth, $n = \sqrt{\mu/r^3}$ is the mean motion of the target satellite around the Earth, $\omega=0.6^\circ$/s is the angular velocity of the target relative to the LVLH frame, and $m_c = 1000$~kg is the mass of the chaser vehicle, assumed constant during the rendezvous. The control inputs $(u_x, u_y)$ are the propulsive forces in the local vertical and local horizontal directions, respectively. We suppose that these forces are 1-norm bounded, $|u_x| + |u_y| \leq 0.25$~kN. 

The Line-of-Sight constraint is also non-linear, and can be encoded exactly using $\theta$, the line of sight angle, 
\eqnN{
h(x) &= \cos\theta - \cos\gamma\\
&= \frac{\vec{r}_{c-p} \cdot \hat e}{||\vec{r}_{c-p}||} - \cos(\gamma) ,}
where $\vec r_{c-p} = [(p_x - \rho \cos\psi), (p_y - \rho \sin\psi)]^T$ is the position vector of the chaser relative to the docking port, and $\hat e = [\cos \psi, \sin\psi]^T$ is the direction vector along the docking axis.

The desired behaviour of the spacecraft is encoded using the Control Lyapunov function: 
\eqnN{
V(x) = \left(v_x + \frac{p_x-\rho\cos\psi}{10}\right)^2 + \left(v_y + \frac{p_y - \rho \sin\psi}{10}\right)^2}
which specifies that the desired velocity $v_d$ is along negative $\vec{r}_{c-p}$.

To construct the ICCBF, again $N=2$ was chosen. The following \classK functions were used:
\eqnN{
\alpha_0(h) = 0.25h, \quad \alpha_1(h) = 0.85h, \quad \alpha_2(h) = (0.05 + k)h}
where $k>0$ is a parameter we allow the Quadratic Program to minimize, as in~\cite{kunalCBF}. Thus, the controller $u^*$ is the solution to $u$ in the following quadratic optimization problem
\eqnN{
\begin{array}{rl}
\underset{u \in \reals^2; \delta, k \in \reals_+}{\text{argmin}} & \half (u_x^2 + u_y^2) + 10 \delta + 50 k\\
\text{subject to} &L_fV(x) + L_gV(x) u \leq -0.1 V(x) + \delta\\
&L_fb_2(x) + L_gb_2(x) u \geq - (0.05 + k) b_2(x)\\
& |u_x| + |u_y| \leq 0.25
\end{array}
}

Figure~\ref{fig:dock}(c-g) show simulation results of the rendezvous operation. The chaser is initialized at (100, -10)~meters from the target spacecraft, and follows the trajectories drawn in (c-e), demonstrating a successful transfer. The 1-norm of the computed thrust force is indicated in (f), and (g) shows that the LOS constraint is satisfied at all times during the transfer. 3D animations, videos and source code for both case studies are available at~\cite{me}.

\section{Conclusion}

In this paper, we have presented a framework that allows input constraints to be explicitly included in the construction of control barrier functions and to guarantee that safety is maintained with an input-constrained controller. The construction identifies an inner safe set and a feedback controller to render the subset safe.  The method was demonstrated in two cases, an adaptive cruise control problem and a spacecraft rendezvous control problem. An optimization based method was used to verify the conditions of the ICCBF. Directions for future work include investigating numerically efficient methods to automate the search of ICCBFs, and to compare the complexity with other reachability methods, in particular for systems with high-dimensional states. Finally, the robustness of this controller to noise and model mismatch could also be investigated.


\bibliography{bibfile.bib}{}

\begin{thebibliography}{10}
\providecommand{\url}[1]{#1}
\csname url@samestyle\endcsname
\providecommand{\newblock}{\relax}
\providecommand{\bibinfo}[2]{#2}
\providecommand{\BIBentrySTDinterwordspacing}{\spaceskip=0pt\relax}
\providecommand{\BIBentryALTinterwordstretchfactor}{4}
\providecommand{\BIBentryALTinterwordspacing}{\spaceskip=\fontdimen2\font plus
\BIBentryALTinterwordstretchfactor\fontdimen3\font minus
  \fontdimen4\font\relax}
\providecommand{\BIBforeignlanguage}[2]{{%
\expandafter\ifx\csname l@#1\endcsname\relax
\typeout{** WARNING: IEEEtran.bst: No hyphenation pattern has been}%
\typeout{** loaded for the language `#1'. Using the pattern for}%
\typeout{** the default language instead.}%
\else
\language=\csname l@#1\endcsname
\fi
#2}}
\providecommand{\BIBdecl}{\relax}
\BIBdecl

\bibitem{Ames2019}
A.~D. {Ames}, S.~{Coogan}, M.~{Egerstedt}, G.~{Notomista}, K.~{Sreenath}, and
  P.~{Tabuada}, ``Control barrier functions: Theory and applications,'' in
  \emph{2019 18th European Control Conference (ECC)}, 2019, pp. 3420--3431.

\bibitem{amesTAC2017}
A.~D. {Ames}, X.~{Xu}, J.~W. {Grizzle}, and P.~{Tabuada}, ``Control barrier
  function based quadratic programs for safety critical systems,'' \emph{IEEE
  Transactions on Automatic Control}, vol.~62, no.~8, pp. 3861--3876, 2017.

\bibitem{ASIF}
T.~{Gurriet}, A.~{Singletary}, J.~{Reher}, L.~{Ciarletta}, E.~{Feron}, and
  A.~{Ames}, ``Towards a framework for realizable safety critical control
  through active set invariance,'' in \emph{2018 ACM/IEEE 9th International
  Conference on Cyber-Physical Systems (ICCPS)}, 2018, pp. 98--106.

\bibitem{kunalCBF}
K.~{Garg} and D.~{Panagou}, ``Control-lyapunov and control-barrier functions
  based quadratic program for spatio-temporal specifications,'' in \emph{2019
  IEEE 58th Conference on Decision and Control (CDC)}, 2019, pp. 1422--1429.

\bibitem{levelset}
I.~M. {Mitchell}, A.~M. {Bayen}, and C.~J. {Tomlin}, ``{A time-dependent
  Hamilton-Jacobi formulation of reachable sets for continuous dynamic
  games},'' \emph{IEEE Transactions on Automatic Control}, vol.~50, no.~7, pp.
  947--957, 2005.

\bibitem{parilloSOS}
G.~Blekherman, P.~A. Parrilo, and R.~R. Thomas, \emph{Semidefinite optimization
  and convex algebraic geometry}.\hskip 1em plus 0.5em minus 0.4em\relax SIAM,
  2012.

\bibitem{SOSQP}
X.~{Xu}, J.~W. {Grizzle}, P.~{Tabuada}, and A.~D. {Ames}, ``Correctness
  guarantees for the composition of lane keeping and adaptive cruise control,''
  \emph{IEEE Transactions on Automation Science and Engineering}, vol.~15,
  no.~3, pp. 1216--1229, 2018.

\bibitem{ELsystems}
W.~S. {Cortez} and D.~V. {Dimarogonas}, ``Correct-by-design control barrier
  functions for euler-lagrange systems with input constraints,'' in \emph{2020
  American Control Conference (ACC)}, 2020, pp. 950--955.

\bibitem{hocbf}
W.~{Xiao} and C.~{Belta}, ``Control barrier functions for systems with high
  relative degree,'' in \emph{2019 IEEE 58th Conference on Decision and Control
  (CDC)}, 2019, pp. 474--479.

\bibitem{ames2014}
A.~D. {Ames}, J.~W. {Grizzle}, and P.~{Tabuada}, ``Control barrier function
  based quadratic programs with application to adaptive cruise control,'' in
  \emph{53rd IEEE Conference on Decision and Control}, 2014, pp. 6271--6278.

\bibitem{blanchini}
F.~Blanchini, ``Set invariance in control,'' \emph{Automatica}, vol.~35,
  no.~11, pp. 1747--1767, 1999.

\bibitem{robustnessXu}
X.~Xu, P.~Tabuada, J.~W. Grizzle, and A.~D. Ames, ``Robustness of control
  barrier functions for safety critical control,'' \emph{IFAC-PapersOnLine},
  vol.~48, no.~27, pp. 54--61, 2015, analysis and Design of Hybrid Systems
  ADHS.

\bibitem{mathematica}
\BIBentryALTinterwordspacing
W.~R. Inc., ``Mathematica, {V}ersion 12.2,'' champaign, IL, 2020. [Online].
  Available: \url{https://www.wolfram.com/mathematica}
\BIBentrySTDinterwordspacing

\bibitem{bezanson2017julia}
J.~Bezanson, A.~Edelman, S.~Karpinski, and V.~B. Shah, ``Julia: A fresh
  approach to numerical computing,'' \emph{SIAM review}, vol.~59, no.~1, pp.
  65--98, 2017.

\bibitem{forwardDiff}
\BIBentryALTinterwordspacing
J.~{Revels}, M.~{Lubin}, and T.~{Papamarkou}, ``Forward-mode automatic
  differentiation in {J}ulia,'' \emph{arXiv:1607.07892 [cs.MS]}, 2016.
  [Online]. Available: \url{https://arxiv.org/abs/1607.07892}
\BIBentrySTDinterwordspacing

\bibitem{docking}
H.~{Park}, S.~{Di Cairano}, and I.~{Kolmanovsky}, ``Model predictive control
  for spacecraft rendezvous and docking with a rotating/tumbling platform and
  for debris avoidance,'' in \emph{Proceedings of the 2011 American Control
  Conference}, 2011, pp. 1922--1927.

\bibitem{me}
{Source code and animations are available at
  \hyperlink{https://github.com/dev10110/Input-Constrained-Control-Barrier-Functions}{https://github.com/dev10110/Input-Constrained-Control-Barrier-Functions}}.

\end{thebibliography}
\bibliographystyle{IEEEtran}


\end{document}